\newtheorem{thr}{Theorem}[section]
\newtheorem{lem}[thr]{Lemma}
\theoremstyle{definition}
\newtheorem{quest}[thr]{Problem}
\theoremstyle{remark}
\numberwithin{equation}{section}
\begin{document}

\title{Chess God's number grows exponentially}

%    Information for first author
\author{Yaroslav Shitov}
%    Address of record for the research reported here
\address{National Research University Higher School of Economics, 20 Myasnitskaya Ulitsa, Moscow 101000, Russia}
\email{yaroslav-shitov@yandex.ru}

%    \subjclass is required.
\subjclass[2000]{91A46, 05C12}
\keywords{Combinatorial game theory, Distance in graphs, Chess}

\begin{abstract}
%The vertices of the graph of a finite game are the positions
%An arc leading from $A$ to $B$ exists
We give an example of two $n\times n$ chess positions, $A$ and $B$, such that
(1) there is a sequence $\sigma$ of legal chess moves leading from $A$ to $B$;
(2) the length of $\sigma$ cannot be less than $\exp \Theta(n)$.
\end{abstract}

\maketitle

\section{Introduction}

%Results establishing the complexity of computing a perfect
%strategy are known for many classical games, including
%the $n\times n$ generalizations of chess~\cite{Chess}, go~\cite{Go}, draughts~\cite{Checkers}, and others.
%In any of these examples, there is no polynomial bound on the number
%of moves that players can make during a single game, and perfect strategies
%are provably intractable (complete in EXPTIME). On the other hand, the generalizations
%of games like Othello~\cite{Oth} or tic-tac-toe~\cite{Tic} terminate after a polynomial
%number of moves is completed, and these games are proven to be complete in PSPACE.
%In general, the computational complexity theory of two-player games is more or less
%well understood, while there are still some problems, like the complexity of go,
%whose solutions are not known to date.

This note presents a contribution in the complexity theory of \textit{puzzles}, or \textit{one-player games},
which can be considered directed graphs with vertices called \textit{positions} and arcs called \textit{moves}.
A player is given a pair of positions and needs to transform one to the other using a sequence of moves.
Well known examples of puzzles include Rubik's cube, Fifteen game, computer simulations like Atomix and Sokoban,
and other games. An important invariant of a puzzle is its \textit{diameter}, that is, the greatest possible distance between
a pair of positions, with distance being the length of a shortest sequence of moves transforming one
to the other. This invariant, hard to be calculated in general~\cite{15}, is sometimes referred to as \textit{God's number}.
For instance, the Rubik's cube God's number has recently been found by an extensive computer search~\cite{Rubik}.

If the diameter of a certain puzzle has an upper bound polynomial in its size,
the player can decide whether a solution exists in non-deterministic polynomial time
just by executing every possible sequence of moves. This is indeed the case for $n\times n$
generalizations of Fifteen game~\cite{15}, Rubik's cube~\cite{Rubik2}, and some other puzzles. On the other hand,
the Sokoban God's number has no polynomial upper bound~\cite{SB}, and this game turns out to be PSPACE-complete.
%The order of growth is still unknown for diameters of several classical
%puzzles, and this note aims to treat the problem for $n\times n$ chess.
%this note presents an exponential lower bound for the diameter of $n\times n$ \textit{chess}.
%and this note is devoted to one called \textit{Retrograde chess}.

The order of growth is still unknown for diameters of several classical
puzzles, and this note aims to treat the problem for $n\times n$ chess.
A related problem has been mentioned in 1981 by Fraenkel and
Lichtenstein~\cite{Chess}, who noted that \textit{reachability}
of a given position from another one may be not quite infeasible.
Chow gives the following formulation of this problem.
%Chow gives a precise formulation of this question as follows.
%\textit{Chess reachability}, or \textit{Retrograde chess}, is a finite game played
%with standard chess pieces on $n\times n$ board. Being given a pair
%of positions, the players aim to transform one of them to the other by
%a sequence of legal chess moves. In contrast with standard chess,
%the retrograde version is actually a one-player game because the
%players act cooperatively. %, and, in fact, it is known to have a solution in polynomial space.
%The retrograde problem is known to have a solution in polynomial space but it is not known to be complete in PSPACE~\cite{SB}.
%The chess reachability problem has been mentioned in 1981 by Fraenkel and Lichtenstein~\cite{Chess}, who noted that reaching
%a certain position from another given one may be not that infeasible; Chow gives a precise
%formulation of this question as follows.

\begin{quest}\cite{MO}\label{probexp}
Does there exist an infinite sequence $(A_n,B_n)$ of pairs of chess positions on an $n\times n$ board such that
the minimum number of legal moves required to get from $A_n$ to $B_n$ is exponential in $n$?
\end{quest}

Our note gives a positive solution of this problem, and we use the
following notation throughout. As said above, a \textit{puzzle} is a
directed graph with vertices called \textit{positions} and arcs called \textit{moves}.
We say that a sequence $\sigma=\sigma_1\ldots\sigma_n$ (where $\sigma_i$ %$\sigma_i=(\pi_i,\rho_i)$
is a move \textit{from} $\pi_i$ \textit{to} $\rho_i$)
is \textit{legal} if $\rho_i=\pi_{i+1}$ holds for every $i$. We call $\pi_1$ the
\textit{initial} position and $\rho_n$ and the \textit{resulting} position of $\sigma$.
Moreover, we say that $\sigma$ is a \textit{repetition} if $\sigma$ is legal, $n=2$, and $\pi_1=\rho_2$.

\section{An auxiliary game}
We need to introduce a new game in order to give a solution for Problem~\ref{probexp}.
Consider a graph $G$ which is a union of a cycle of length $3m$ and $m$ triangles;
we denote the $j$th vertex of $i$th triangle by $(i,j)$ with $i\in\{1,\ldots,m\}$
and $j\in\{1,2,3\}$. The vertices of the large cycle are labeled $(0,t)$
with $t\in\mathbb{Z}/3m\mathbb{Z}$. The game is played with two types of pieces, one
\textit{chip} and $3m$ \textit{switches}. %\textcolor{blue}{
We assume that every vertex of $G$ is
either empty, or occupied by the chip, or occupied by exactly one switch.
(In particular, it is not possible for a vertex to be occupied by more than one
switch at the same time, or by the chip and a switch at the same time.) %}
Also, we assume that either $(i,j)$ or $(0,3i+j-3)$ is occupied by a switch.
The game consists in completing a legal sequence of \textit{single moves}
each of which belongs to one of the following categories.

\textit{Swap.} If the chip is located in one of the vertices $(i,j)$ and $(0,3i+j-3)$,
it can be swapped with a switch from the other of these vertices.

\textit{Jump.} If the chip is located on a triangle, it can jump (that is, can be moved)
to any vacant vertex of the same triangle. If $(0,j)$ is occupied by the chip, it can
jump to $(0,j+1)$ or $(0,j-1)$ if any of these vertices is vacant.

In initial position $P$, switches occupy vertices $(i,1)$, $(i,3)$, $(0,3i-1)$;
the chip is located initially in arbitrary vertex
on the large cycle. Figure~\ref{figinitaux} represents the starting position with $m=3$ with
dark squares being switches and \textit{C} a chip.

\begin{figure}[tbph]
  \begin{center}
      \includegraphics[scale=0.38]{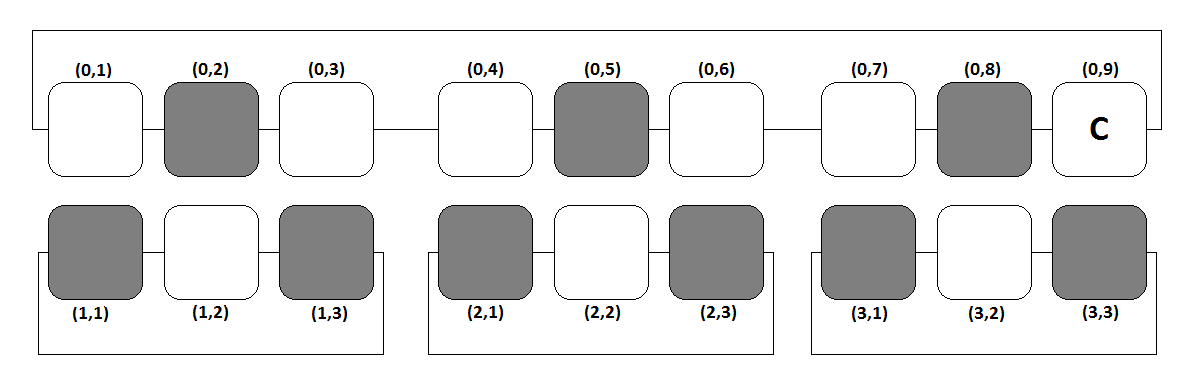}
    \caption{Initial position of auxiliary game.}
    \label{figinitaux}
  \end{center}
\end{figure}

Consider a move sequence $\sigma$ resulting in a position $P'$. Denote
by $\#(\sigma,i_0,j_1,j_2)$ the number of single moves from
$\sigma$ that are jumps from $(i_0,j_1)$ to $(i_0,j_2)$.
(In particular, $\#(\sigma,0,j_1,j_2)$ may be nonzero
only if $j_1-j_2$ is either $1$ or $-1$.)
Define the \textit{flow} $\mathcal{F}(\sigma,i_0,j_0)$ through $(i_0,j_0)$ as the half of
$$\#(\sigma,i_0,j_0-1,j_0)+\#(\sigma,i_0,j_0,j_0+1)-
\#(\sigma,i_0,j_0+1,j_0)-\#(\sigma,i_0,j_0,j_0-1).$$

\begin{lem}\label{lemflow}
If $\sigma$ is legal and $P=P'$, then $\mathcal{F}(\sigma,i,2)=\mathcal{F}(\sigma,0,3i-1)$.
\end{lem}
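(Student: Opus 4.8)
The plan is to prove a running-time refinement of the identity and then telescope. For a prefix $\tau$ of $\sigma$ write $D(\tau)=\mathcal{F}(\tau,i,2)-\mathcal{F}(\tau,0,3i-1)$, so that the assertion is exactly $D(\sigma)=0$. I would look for a bounded \emph{potential} $\Phi$ depending only on the current position (positions of the chip and of the switches) such that every single move $\mu$ carrying a position $\pi$ to a position $\rho$ satisfies $D(\tau\mu)-D(\tau)=\Phi(\rho)-\Phi(\pi)$. Granting such a $\Phi$, the quantity $D(\tau)-\Phi(\mathrm{pos}(\tau))$ is constant along $\sigma$; since $P=P'$ forces the first and last positions to coincide, it follows that $D(\sigma)=D(\varnothing)=0$, which is the lemma. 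Note that this route never needs the two flows to be genuine circulations; the whole content is packaged into the per-move identity.

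The combinatorial input that makes a suitable $\Phi$ exist is a conservation law for the switches. I would first observe that, for each fixed $i$, the number of vertices among $(i,1),(i,2),(i,3)$ that are occupied (by a switch or by the chip) is invariant under every single move: a jump keeps the chip on its current triangle or on the cycle, while a swap merely exchanges the chip and a switch inside one pair, so a triangle vertex that was occupied stays occupied. In $P$ exactly two of these three vertices carry switches and the chip sits on the cycle, so every reachable position has precisely one vacant vertex on each triangle. In particular, whenever the chip occupies $(i,2)$ or $(0,3i-1)$, the vertex $(i,2)$ is occupied and the unique vacant vertex of the $i$th triangle is $(i,1)$ or $(i,3)$. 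This dichotomy, and the impossibility of the chip ever looping a triangle (or the whole cycle) freely, is precisely what guarantees that the naive ``triangle winding minus cycle winding'' is single valued.

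This dichotomy is exactly what I need to define $\Phi=\Phi_i$: set $\Phi$ equal to $+\tfrac12$ when the chip lies on $(i,2)$ or $(0,3i-1)$ and the vacant vertex of the $i$th triangle is $(i,1)$, equal to $-\tfrac12$ in the same situation when the vacant vertex is $(i,3)$, and equal to $0$ otherwise. Verifying the telescoping identity then reduces to a finite case check over the move types. A jump between $(i,1)$ and $(i,2)$, a jump between $(i,2)$ and $(i,3)$, and a cycle jump incident to $(0,3i-1)$ each change $D$ by $\pm\tfrac12$, and the occupancy law pins down the vacant vertex immediately before and after, so $\Phi$ changes by exactly the same amount; every jump avoiding $(i,2)$ and $(0,3i-1)$ leaves both $D$ and $\Phi$ at their previous values. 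The delicate point, which I expect to be the crux, is the behaviour of the swaps, and above all the ``revolving door'' swap exchanging the chip between $(i,2)$ and $(0,3i-1)$: it contributes nothing to either flow, so I must check that it also leaves $\Phi$ fixed, which it does because it moves the chip within the set $\{(i,2),(0,3i-1)\}$ without disturbing the vacant vertex of the triangle. Once all move types are matched in this way, the telescoping argument closes the proof.
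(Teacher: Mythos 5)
Your proposal is correct, but it takes a genuinely different route from the paper. The paper extracts from $\sigma$ the subsequence of \emph{effective} moves (those touching $(i,2)$, $(0,3i-1)$, or the switches of triangle $i$'s three pairs), classifies the relevant local configurations into twelve positions arranged in a cycle, argues that every effective move steps by $\pm1$ along that cycle while legal ineffective stretches fix the index, and then cancels adjacent direct/reversed pairs to reduce to a single full traversal, on which both flows equal $\mp 1$. You instead make the difference $D=\mathcal{F}(\cdot,i,2)-\mathcal{F}(\cdot,0,3i-1)$ an exact discrete derivative of a three-valued potential and telescope over the closed trajectory. Your case check does go through, though one step deserves more emphasis than your phrase ``the occupancy law pins down the vacant vertex'': for cycle jumps incident to $(0,3i-1)$ it is the pair constraint (a switch on $(i,j)$ or $(0,3i+j-3)$), not the one-vacancy-per-triangle invariant alone, that does the pinning --- e.g.\ a jump $(0,3i-2)\to(0,3i-1)$ forces switches onto $(i,1)$ and $(i,2)$, hence vacant vertex $(i,3)$ and $\Phi$-change $-\tfrac12$, matching $D$; the three analogous jumps check identically. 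Your argument is more self-contained (no figure-based twelve-state classification, no assertion about what ineffective subsequences can do) and proves the stronger fact that $D$ depends only on the endpoint positions; the paper's analysis buys more than the lemma, since its canonical eleven-move traversal is precisely the gadget used immediately afterwards to build the sequence $\tau$ with $\mathcal{F}(\tau,i,2)=1$, which your potential argument does not supply.
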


\begin{proof}
Focus the attention on those single moves from $\sigma$ which involve vertices $(i,2)$ and $(0,3i-1)$,
and those which change the positions of switches acting between $(i,j)$ and $(0,3i+j-3)$; we call these
moves \textit{effective}, and all other moves \textit{ineffective}. By $\sigma'$ we denote the subsequence
of $\sigma$ consisting of all effective moves; note that $\sigma'$ need not be legal. By definition of flow,
$\sigma$ and $\sigma'$ have the same flows through $(i,2)$ and the same flows through $(0,3i-1)$.
The combinatorial analysis shows that any effective move leads from a position
indexed $t$ on Figure~\ref{fig12} to the position indexed either $t+1$ or $t-1$ (such a move is called a \textit{direct type
$t$} move in the former case and a \textit{reversed type $t$} move in the latter case).

\begin{figure}[tbph]
  \begin{center}
\includegraphics[scale=0.34]{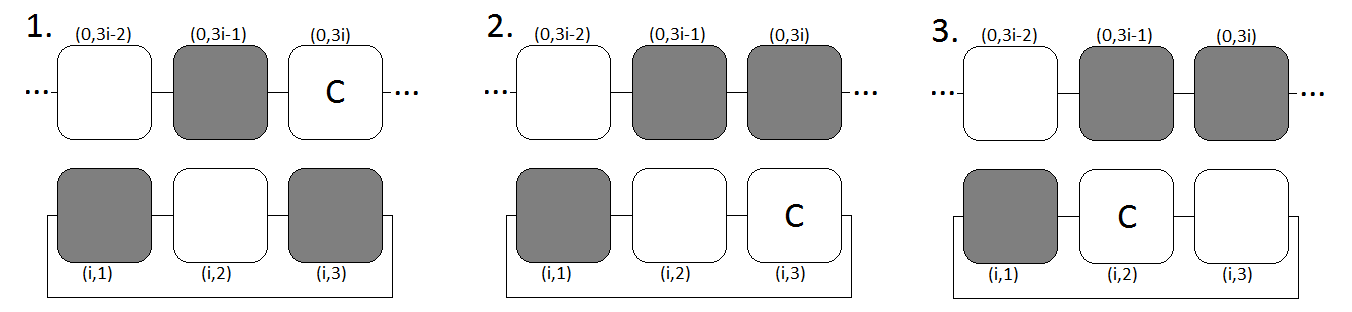}
\includegraphics[scale=0.34]{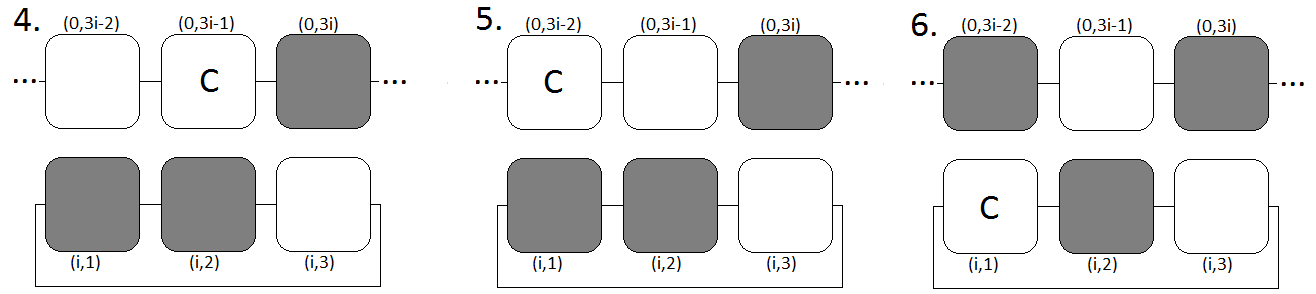}
\includegraphics[scale=0.34]{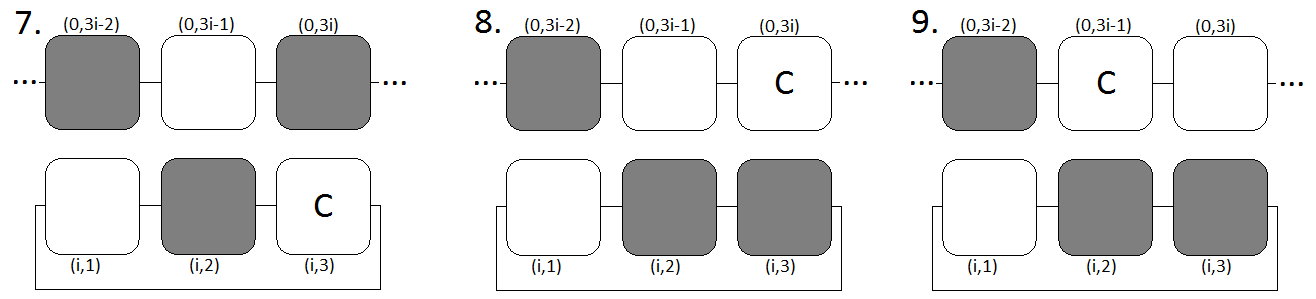}
\includegraphics[scale=0.34]{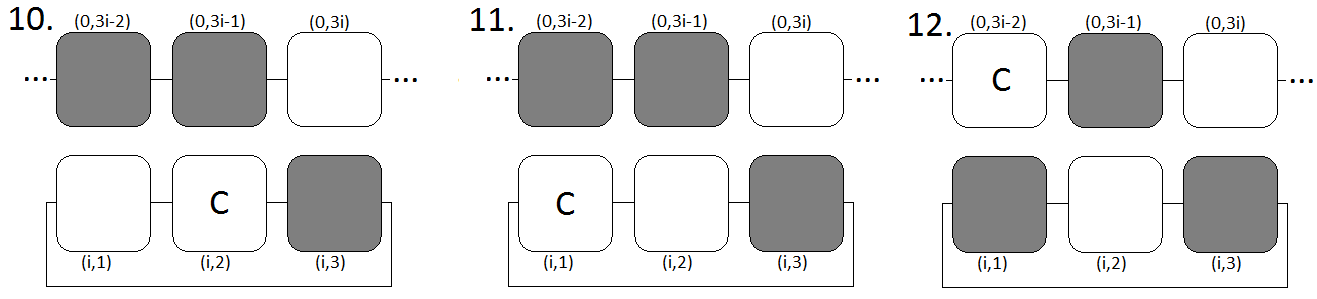}
    \caption{Positions corresponding to effective moves.} % To get $P'$ from it, swap bishops in circles.}
    \label{fig12}
  \end{center}
\end{figure}

Note that a legal sequence of ineffective moves cannot lead from a position indexed $t_1$ to a position indexed $t_2$ unless $t_1=t_2$
or $\{t_1,t_2\}=\{1,12\}$. Therefore, a move of direct type $t$ (modulo $11$) can be followed in $\sigma'$ by a type $t+1$ move only;
similarly, a move of reversed type $t$ can be followed by a type $t-1$ move only.
Since a direct type $t$ move and a reversed type $t+1$ move form a sequence with zero flow, %we can assume that $\sigma'$ contains direct
%moves only or reversed moves only.
we can remove such a pair from $\sigma'$ without changing its flow.
Therefore, it suffices to consider the case when $\sigma'$ is a sequence of $11$ consecutive direct type $1,\ldots,11$ moves
(or inverse type $12,\ldots,2$ moves). In this case, the quantities
$\mathcal{F}(\sigma',i,2)$ and $\mathcal{F}(\sigma',0,3i-1)$ equal $-1$ (or $1$, respectively).
\end{proof}

Let us complete the legal sequence of effective moves leading from 1 to 12, then move the chip from $(0,3i-2)$ to $(0,3i-3)$.
Then, we can do these moves again for switches between $(i-1,j)$ and $(0,3(i-2)+j)$ instead of those between $(i,j)$ and $(0,3i+j-3)$.
By induction, we construct a legal sequence $\tau$ of moves which leads to the starting position and satisfies $\mathcal{F}(\tau,i,2)=1$, for any $i$.
Completing the sequence $\tau$ several times still leaves us in the starting position, and we have $\mathcal{F}(\tau^k,i,2)=k$.

%Now let us complete the move sequence $\tau$ described above, then move the chip from $(0,3i-2)$ to $(0,3i-3)$. Then, we can complete
%all these moves again for switches between $(i-1,j)$ and $(0,3(i-2)+j)$ instead of those between $(i,j)$ and $(0,3i+j-3)$.
%By induction, we construct a sequence $\psi$ of moves which leads to the starting position and satisfies $\mathcal{F}(\psi,i,2)=1$, for any $i$.
%Completing the sequence $\psi$ several times still leaves us in the starting position, and we have $\mathcal{F}(\psi^k,i,2)=k$.

\section{The chess positions}
Define a \textit{short bishop} graph on the $n\times n$ chessboard by declaring a pair
of squares adjacent if they have the same color and one of them
can be reached from the other by a single king move. A subset of
chessboard is called a \textit{bishop cycle} if the short bishop graph it induces is a cycle.
Removing two non-adjacent squares splits a bishop cycle into two connected components which we call \textit{segments}.
We say that a pair of squares is \textit{touching} if they have different colors and one of them
can be reached from the other by a single king move.

Now we are ready to provide a pair of positions $P$ and $P'$ on the $n\times n$ chessboard such that
(1) there is a legal sequence $\sigma$ of chess moves leading from $A$ to $B$; (2) the length of $\sigma$
cannot be less than some function growing exponentially in $n$. The positions $A$ and $B$ will use the same
multiset of pieces, and the pawns in $A$ will be located in the same positions as in $B$. Therefore, $\sigma$
can contain no captures and no pawn movements. Also, removing any repetition contained in $\sigma$, we get a
legal sequence of smaller length with the same starting and initial positions; therefore, we can assume that
$\sigma$ contains no repetitions.

The only pieces actually used in our positions are pawns, bishops, and rooks. These pieces are located in a region $R$
of the board bounded by a pawn chain. Since the pawns are untouchable, this chain forbids the pieces to leave $R$.
We can add $O(1)$ more ranks and files to $R$ and locate there a pair of kings, in order to make our positions satisfy
the rules of chess. These settings allow us to consider a chess-like game played in region $R$ with standard chess rules
except that (1) White and Black do not have to alternate their moves, (2) captures, pawn moves, and repetitions are forbidden,
and (3) there are no kings.

In order to describe the position within $R$, denote $p_1=7$ and let $p_{i+1}$ be the smallest prime exceeding $p_i$.
Let $R$ contain $m$ disconnected dark-squared bishop cycles of lengths $2(p_1+1),\ldots,2(p_m+1)$
and one light-squared bishop cycle; all other squares in $R$ are occupied by pawns. There are $3m$ pairwise non-adjacent
squares on light cycle which we call \textit{switch squares} and identify with vertices $(0,t)$ from $G$; we assume that one of the segments
between $(0,t)$ and $(0,t+1)$ contains no switch squares. On $i$th dark cycle, there will be also three non-adjacent \textit{switch squares}
identified with vertices $(i,1)$, $(i,2)$, $(i,3)$ from $G$. We say that a square $x$ lies \textit{between} $(i_0,j_0)$ and $(i_0,j_0+1)$
if $x$ belongs to that segment with ends $(i_0,j_0)$ and $(i_0,j_0+1)$ which contains no switch squares. We assume that the $(i,j)$ and
$(0,3i-3+j)$ switch squares are touching, and no other pair of squares on cycles is touching.
%No pair of squares on the cycles will be touching unless it is a pair of $(i,j)$ and $(0,3i-3+j)$.

In the starting position, the switch squares $(i,1)$, $(i,3)$, $(0,3i-1)$ are occupied by rooks;
one of the squares on the light cycle is vacant, and all the other squares are occupied by bishops.
One of the bishops on every dark cycle is white, and all the other pieces are black. (In fact, there is no need to
specify the colors of pieces outside the dark cycles.) It is easy to construct such a position in the case when
$p_m<cn$, for some sufficiently small absolute constant $c$. An example, which corresponds to the case
$p_1=7$, $p_2=11$, $p_3=13$, is provided on Figure~\ref{figchess}; the dotted squares correspond to pawns, which are untouchable in our model.

\begin{figure}[tbph]
  \begin{center}
      \includegraphics[scale=0.4]{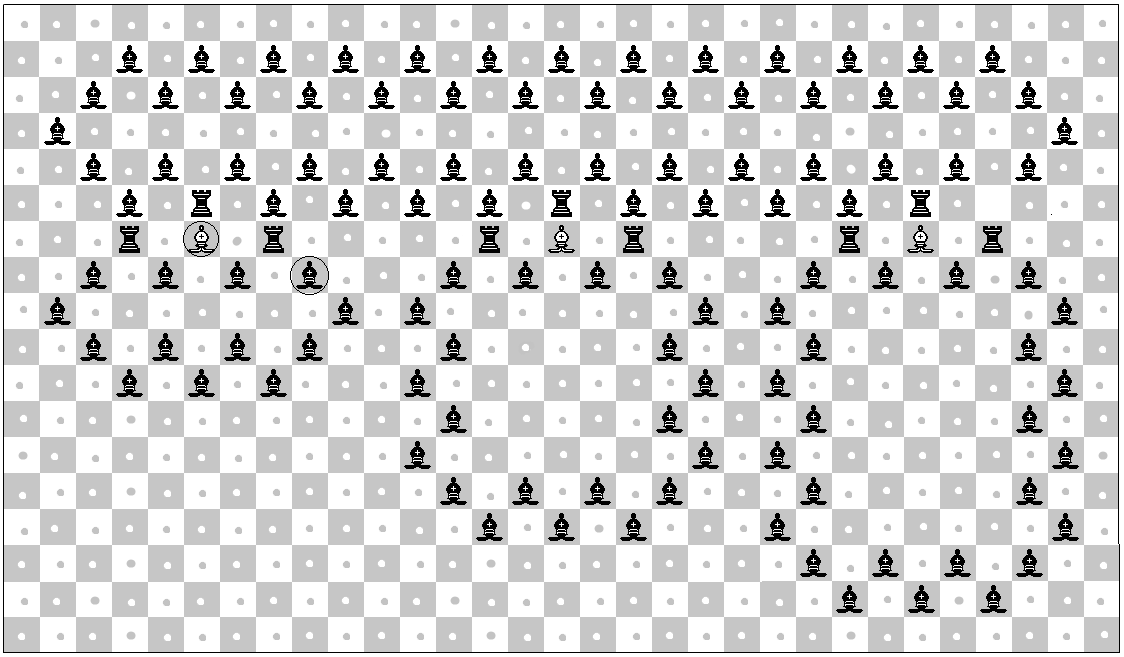}
    \caption{Position $P$. A swap of marked bishops leads to $P'$.} % To get $P'$ from it, swap bishops in circles.}
    \label{figchess}
  \end{center}
\end{figure}

Now we explain how is the constructed position related to the game described in previous section.
Note that every rook can only move between the $(i,j)$ and $(0,3i-3+j)$ switch squares.
Since repetitions are not allowed, we see that moving a bishop located between switch
squares $(i_0,j_0)$ and $(i_0,j_0+1)$ to the switch square $(i_0,j_0+1)$ is to be followed
by the sequence of consecutive bishop moves eventually leaving $(i_0,j_0)$ vacant. Therefore, playing chess
in the constructed position is the same as playing the game from previous section as we identify
rooks with switches and the vacant square, which is unique in region $R$, with chip.

Now let $\sigma$ be any legal sequence of moves such that the starting and resulting positions coincide, up to color of pieces.
We can note that the flow $\mathcal{F}(\sigma,0,j)$ is equal to the number of times a bishop located between switch
squares $(0,j)$ and $(0,j+1)$ moved through $(0,j)$ to a position between $(0,j)$ and
$(0,j-1)$ minus the number of times bishops moved in opposite direction. In particular, it is now clear that
the flow $\mathcal{F}(\sigma,0,t)$ does not depend on $t$. Similarly, since there are one white and $2p_i-1$ black bishops
moving in $i$th cycle, the remainder of $\mathcal{F}(\sigma,i,2)$ modulo $2p_i$ determines the position of white bishop.
By Lemma~\ref{lemflow}, the flow $\mathcal{F}(\sigma,i,2)$ is independent of $i$, but it can take any value as $\sigma$ varies.

Now consider the starting position $P$ and construct a new position $P'$ by swapping the positions of the white bishop from the first
dark cycle and a black bishop located a distance two apart from this white bishop.
(In particular, $P'$ can be obtained from the position on Figure~\ref{figchess} by swapping the bishops contained in ovals; in general,
we say that two bishops from the same dark cycle lie a \textit{distance two apart} if they define a segment containing exactly one bishop.)
Then, we conclude by the Chinese remainder theorem that there is a sequence $\sigma'$ leading from $P$ to $P'$.
Then, $\sigma'$ is such that $\mathcal{F}(\sigma',i,2)$ divides $2p_i$ if and only if $i\neq1$. Now we see that
$\mathcal{F}(\sigma',i,2)$ is non-zero and divides $\prod_{i=2}^mp_i$; the latter quantity is exponential in $n$
since the product of primes not exceeding $n$ is $\exp(n+o(n))$.

\section{Concluding remarks}
Problem~\ref{probexp} has also been studied for some less natural generalizations of chess, see a discussion in~\cite{MO}.
In particular, there was an attempt of solving it for a chess-like game with $O(n)$ kings none of which can be leaved under
attack. Also, there is a negative solution for Problem~\ref{probexp} if a game is assumed to terminate %under the rule stating that a game terminates
after fifty moves without captures and pawn moves; this generalization, however, seems rather artificial.
In fact, the rules of $8\times8$ chess do not say that a game terminates immediately after
fifty consecutive moves of this kind: The players are allowed to play on
if neither of them wants a draw~\cite{ChessLaws}. Finally, note that some authors~\cite{Chess} consider an
upper bound for number of pieces growing as a fractional power of board size. In order to
make our positions satisfy this restriction, we can add sufficiently many empty ranks and files to them,
getting a lower bound for the diameter of $n\times n$ chess exponential in a fractional power of $n$.

\bigskip

I would like to thank Timothy Y. Chow for interesting discussion and many helpful suggestions on presentation of the result.
%I would like to thank Timothy Y. Chow for interesting discussion and many helpful
%suggestions, which lead to a significant improvement of presentation of the result.

\end{document}